\def\la{\lambda}
\def\al{\alpha}
\newcommand{\pa}[1]{{\color{red} {\bf [}{\color{blue} P: } #1{\bf ]}}}
\newcommand{\bel}{\begin{equation}\label}
	\newcommand{\ee}{\end{equation}}
\def\PP{\mathcal P}
\def\D{\Delta}
\def\R{{\mathbb R}}
\def\C{{\mathbb C}}
\def\Z{{\mathbb Z}}
\def\<{\langle}
\def\>{\rangle}
\def\P{{\mathds P}}
\def\E{{\mathds E}}
\def\eps{\epsilon}
\def\i{\underline i}
\def\II{\mathbb I}
\def\0{\underline 0}
\def\1{\underline 1}
\def\lf{\lfloor}
\def\rf{\rfloor}
\newtheorem{theorem}{Theorem}[section]
\newtheorem{proposition}[theorem]{Proposition}
\newtheorem{lemma}[theorem]{Lemma}
\newtheorem{remark}{Remark}[section]
\theoremstyle{definition}
\title{Expected number of jumps and the number of active particles in TASEP}
\author{Paweł Hitczenko \thanks{Department of Mathematics, Drexel University, Philadelphia, PA 19130, USA \\ Email: \texttt{pawel.hitczenko@drexel.edu}}  \hspace{1mm} and Jacek Wesołowski \thanks{Faculty of Mathematics and Information Sciences, Warsaw University of Technology, Koszykowa 75,
00-662 Warsaw, Poland \\
Email: \texttt{jacek.wesolowski@pw.edu.pl}}}
\begin{document}

	\maketitle
	
	\begin{abstract}
	    For a TASEP on $\mathbb Z$ with the step initial condition we identify limits as $t\to\infty$ of the expected total number of jumps until time $t>0$ and the expected number of active particles at a time $t$. We also connect the two quantities proving that non-asymptotically, that is as a function of  $t>0$, the latter is the derivative of the former. Our approach builds on asymptotics derived by Rost and intensive use of the fact that the rightmost particle evolves according to the Poisson process.
	\end{abstract}
	
	\section{Introduction}

 Totally asymmetric simple exclusion process (TASEP) is among the most investigated interacting particle systems. At any time there is at most one particle at each of the sites of a one dimensional lattice $\mathbb{Z}$ and the dynamics of the model is as follows: each particle waits an $\rm{Exponential}(1)$ time (which we will refer to as a 'clock') and then jumps to the site to its right, provided that there is no particle at that site. If that site is occupied, the particle does not move and its clock resets.  The $\rm{Exponential}(1)$ clocks  for the same particle and the clocks corresponding to different particles are  all mutually  independent. In this paper, we consider a TASEP with the step initial condition, i.e. initially particles are placed at the sites $0, -1,-2,\dots$ and the sites $1, 2, 3,\dots $ are unoccupied.
	
	TASEP is the simplest of the interacting particle models, referred to as simple exclusion processes. They were introduced in mathematics in the late 1960s by Spitzer~\cite{s} and have been studied ever since. Aside from the early work by Spitzer, followed by Liggett's \cite{l}, later contributions by Rost \cite{r} and Sch\"utz \cite{sc} seem to have been quite influential. In particular, the latter paper gives a determinantal formula for the probability that there are particles at any finite set of sites in $\mathbb Z$ at a time $t$, given their positions at a time $0$,  while the former is concerned with the asymptotic properties of the number of particles in a given region.  Rost's results were further elaborated on e.g. in \cite{sep}.
	
	In recent years, there has been substantial interest in TASEP as it has been identified as one of the simplest models in a universality class of stochastic growth models referred to as the KPZ universality class. In this context, the so-called height function and its fluctuations have been the main objects of the studies, see, for example, \cite{bf,bfs,cfp,j,ps} where TASEP with the step initial condition (or its generalizations) is considered. 
    A more detailed discussion of TASEP and its role in the KPZ universality class may be found in \cite{a, mqr, mr} and references therein.  In particular, in \cite[Theorem~3.13]{mqr}  TASEP is used to construct  the so--called  KPZ fixed point (see also \cite[Section~16]{dv} where this is further discussed and convergence of TASEP to the directed landscape is established).
	
	
	As we mentioned earlier, most of the up--to--date  work on TASEP concerns the height function and its fluctuations.  For example, for  the step initial condition it was shown in \cite{j} that the scaling limit of the height function is the Tracy--Widom largest eignevalue distribution for the  Gaussian Unitary Ensemble.
   
	In this paper, we will focus on two aspects of TASEP that seem basic to describing its evolution, though we were not able to trace related results in the literature: they concern the total number of jumps of all particles up to a time $t$ and the number of active particles at the time $t$ (i.e., the particles that can move at a time $t$). Specifically, we will establish the asymptotics of the expected values of these quantities (after a suitable normalization).  We refer to Theorem~\ref{pole} and Theorem~\ref{thm_act_part} below for the precise statements. 
	
	It is natural to expect that these two quantities are related: how fast the process is evolving at a given time should depend on how many particles can move at that time. Nonetheless, we have not seen any such formal statements in the literature.

 For  TASEP with the step initial configuration we will formalize this relationship. Namely, we will prove in  Theorem~\ref{thm_active} below that the number of active particles at a time $t$ is the derivative of the  number of jumps by the process up to that time, at least at the level of the expected values.  It is conceivable that similar statement is true for more general classes of interacting particle processes.
 
	
	Our methods are elementary. We rely heavily on the fact that the jumps of the rightmost particle are unconstrained and thus follow a Poisson process. We will also exploit some of the techniques and results established by Rost's paper \cite{r} that heavily influenced our work.

	\section{Notation and Preliminaries}

	Following Rost~\cite{r} we let $X(k,t)$ be the indicator of the event that there is a particle in a position $k$ at time $t$, and we let
	\[S(k,t):=\sum_{i>k}X(i,t)\]
	be the number of particles to the right of $k$ at time $t$.  
	For step initial conditions, limiting behavior of the distribution of $S(\lf ut\rf,t)$ for fixed $u\in[0,1)$ and $t\to\infty$  was determined in \cite{j}, see Corollary~1.7 there.
	For our purpose, the following estimate for the expected value of $S(k,t)$ will be sufficient.

When we refer to particles, we will  number them from the right to the left, i.e. the rightmost particle will be referred to as the first, the second rightmost as the second, etc.

	\begin{lemma}\label{lem_exp_skt} For $k\in \mathbb Z_+$ and $t>0$ we have
		\bel{exp_skt} \E\ S(k,t)\le\frac{t^{k+1}}{k!}.\ee
	\end{lemma}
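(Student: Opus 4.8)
The plan is to write $\E\,S(k,t)$ as a sum over particles and to dominate each particle's displacement by the number of rings of its own Poisson clock. Number the particles from the right, so that the $j$-th particle starts at site $1-j$, and let $x_j(t)$ denote its position at time $t$. Since particles keep their order in TASEP, $x_1(t)>x_2(t)>\cdots$, and the event $\{S(k,t)\ge j\}$ coincides with $\{x_j(t)>k\}$. Summing the tail probabilities,
\[
\E\,S(k,t)=\sum_{j\ge 1}\P\bigl(S(k,t)\ge j\bigr)=\sum_{j\ge 1}\P\bigl(x_j(t)>k\bigr).
\]

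The key step is a one-sided comparison with a Poisson process. The clock of the $j$-th particle rings at the epochs of a rate-$1$ Poisson process $\Pi_j$, independently of the configuration and of the other clocks, because after every ring (whether a successful jump or a blocked attempt) the particle waits a fresh $\mathrm{Exponential}(1)$ time. Each unit of rightward displacement requires one of the particle's own successful rings, so $x_j(t)-x_j(0)\le \Pi_j([0,t])$, where $\Pi_j([0,t])$ is $\mathrm{Poisson}(t)$-distributed. Since $x_j(0)=1-j$, the event $\{x_j(t)>k\}=\{x_j(t)\ge k+1\}$ forces at least $k+j$ rings, whence
\[
\P\bigl(x_j(t)>k\bigr)\le \P\bigl(\Pi_j([0,t])\ge k+j\bigr)=\P\bigl(N\ge k+j\bigr),\qquad N\sim\mathrm{Poisson}(t).
\]

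Plugging this in and interchanging the two sums gives
\[
\E\,S(k,t)\le\sum_{j\ge 1}\P(N\ge k+j)=\sum_{m>k}(m-k)\,e^{-t}\frac{t^{m}}{m!}=e^{-t}t^{k}\sum_{i\ge 1} i\,\frac{t^{i}}{(k+i)!}.
\]
The asserted bound then follows from the elementary inequality $(k+i)!\ge k!\,i!$ (equivalently $\binom{k+i}{i}\ge 1$): it yields $\sum_{i\ge1} i\,t^{i}/(k+i)!\le (1/k!)\sum_{i\ge1} i\,t^{i}/i!=t\,e^{t}/k!$, and substituting back produces $\E\,S(k,t)\le t^{k+1}/k!$. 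The only delicate point is the comparison step: one must verify that the ring times of a single particle genuinely form a rate-$1$ Poisson process irrespective of whether the attempts succeed, which is exactly the memorylessness built into the clock reset rule. Everything else is bookkeeping, and one may note in passing that the case $k=0$ saturates the inequality, since then both sides equal $t$.
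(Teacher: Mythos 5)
Your proof is correct and follows essentially the same route as the paper: both reduce $\E\,S(k,t)$ to the tail sum $\sum_{m\ge1}\P(\mathrm{Poisson}(t)\ge k+m)$ and finish with the factorial inequality $(k+i)!\ge k!\,i!$. The only (immaterial) difference is in justifying the Poisson domination --- the paper passes to the first particle, whose jump process is exactly Poisson, while you dominate the $j$-th particle's displacement by the rings of its own clock.
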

	\proof
	We first write
	\[\E\ S(k,t)=\sum_{m\ge1}\P(S(k,t)\ge m).\]
	Next, note that $S(k,t)\ge m$ means that there are at least $m$ particles to the right of $k$.  This means, in particular, that the $m$th particle made at least $m-1+k+1=k+m$ jumps by the time $t$. Hence, the first particle must have made at least as many moves by the time $t$. In other words \bel{fst_particle}
	\mathds P(S(k,t)\ge m)\le \mathds P(N_t^{(1)}\ge k+m),
	\ee
	where $N_t^{(1)}$ is the number of jumps by the first particle up to time $t$. Since $N_t^{(1)}$ is ${\rm Poisson}(t)$ random variable, we further have 
		\begin{align*}\sum_{m\ge1}\,\mathds P\left(N_t^{(1)}\ge m+k\right)&=\sum_{m\ge1}e^{-t}\sum_{j\ge0}\frac{t^{m+k+j}}{(m+k+j)!}\le\frac{t^{k}}{k!}\sum_{m\ge1}e^{-t}\sum_{j\ge0}\frac{t^{m+j}}{(m+j)!}\\&=\frac{t^{k}}{k!}\sum_{m\ge1}\,\mathds P(N_t^{(1)}\ge m)
			\end{align*}
	where for the inequality we used  $(m+k+j)!\ge(m+j)!k!$. 
	The assertion follows since
	\[\sum_{m\ge1}\,\mathds P(N_t^{(1)}\ge m)=\mathds E\, N_t^{(1)}=t.\]
	\endproof

	\section{Total number of jumps}

	Let $P_t$ denote the total number of jumps of (all) particles in a TASEP until $t>0$, which corresponds to the area below the height function representation of the TASEP. In this section we establish the following asymptotics for $\E\, P_t$.

	\begin{theorem} \label{pole}As $t\to\infty$ 
		\[\frac{\mathds E\, P_t}{t^2}\to\frac16.\]
	\end{theorem}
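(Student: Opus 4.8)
The plan is to rewrite $P_t$ as a sum of bond--crossing counts. For $k\in\Z$ let $J(k,t)$ be the number of times a particle jumps from site $k$ to site $k+1$ up to time $t$; since every jump crosses exactly one bond, $P_t=\sum_{k\in\Z}J(k,t)$, a finite sum almost surely. Because mass is conserved and particles only move right, the occupation of the half--line $\{i>k\}$ changes only through crossings of the bond $(k,k+1)$, so $J(k,t)=S(k,t)-S(k,0)$, where $S(k,0)=(-k)_+$ is the initial profile of the step configuration; in particular $J(k,t)=S(k,t)$ for $k\ge0$. Taking expectations,
\[\E\, P_t=\sum_{k\ge0}\E\, S(k,t)+\sum_{k<0}\big(\E\, S(k,t)+k\big).\]

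First I would exploit particle--hole symmetry. Under the reflection $j\mapsto 1-j$ the holes of the step configuration (sites $\ge1$) become the particles of another step--initialized TASEP with the same law, and a rightward crossing of the bond $(k,k+1)$ corresponds to a rightward crossing of the bond $(-k,-k+1)$ in the dual process. Hence $\E\, J(k,t)=\E\, J(-k,t)$, equivalently $\E\, S(-j,t)=\E\, S(j,t)+j$ for $j>0$, which collapses the display above to
\[\E\, P_t=2\sum_{k\ge0}\E\, S(k,t)-\E\, S(0,t).\]
This is the key reduction: it removes the $k<0$ range (where $S(k,t)$ is \emph{not} dominated by the rightmost particle) and leaves a single sum over $k\ge0$, up to the term $\E\, S(0,t)\le t$ which is negligible after dividing by $t^2$.

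Next I would pin down the scaling limit $\tfrac1t\E\, S(\lf ut\rf,t)\to \tfrac{(1-u)^2}{4}$ for $u\in(0,1)$ (and $0$ for $u\ge1$). Rost's hydrodynamic limit~\cite{r} gives convergence in probability of $\tfrac1t S(\lf ut\rf,t)$ to $\int_u^1\tfrac{1-v}{2}\,dv=\tfrac{(1-u)^2}{4}$, the integral of the limiting density over $(u,1)$; to upgrade this to convergence of expectations I would invoke uniform integrability, which follows from the domination $S(k,t)\le N_t^{(1)}$ valid for $k\ge0$ (the observation already behind~\eqref{fst_particle}), since $\tfrac1t N_t^{(1)}$ is a normalized Poisson variable and hence uniformly integrable. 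The same bound yields the uniform estimate $\E\, S(k,t)\le t$.

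Finally I would realize $\tfrac1{t^2}\sum_{k\ge0}\E\, S(k,t)$ as a Riemann sum of $g_t(u):=\tfrac1t\E\, S(\lf ut\rf,t)$ with mesh $1/t$ and pass to the limit $\int_0^\infty \tfrac{(1-u)^2}{4}\,\mathbf 1_{[0,1]}(u)\,du=\tfrac1{12}$, so that $\E\, P_t/t^2\to 2\cdot\tfrac1{12}=\tfrac16$. Here the main obstacle is the interchange of the limit $t\to\infty$ with the infinite summation. I would split the sum at $k=Ct$ for a fixed $C>1$: on $0\le k\le Ct$ the uniform bound $g_t\le 1$ together with the almost--everywhere convergence $g_t\to\tfrac{(1-u)^2}{4}$ yields the integral by dominated convergence, while on the tail $k>Ct$ the refinement $\E\, S(k,t)\le\E\big[(N_t^{(1)}-k)_+\big]$ underlying Lemma~\ref{lem_exp_skt}, combined with the exponential concentration of $N_t^{(1)}$ around its mean $t$, gives $\tfrac1{t^2}\sum_{k>Ct}\E\, S(k,t)\to0$. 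Thus the Poissonian control of the rightmost particle does double duty, supplying both the uniform integrability that converts Rost's in--probability limit into a limit of means and the tail bound that justifies the Riemann--sum passage.
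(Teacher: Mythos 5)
Your proposal is correct and follows essentially the same route as the paper: the identity $P_t=\sum_{k}(S(k,t)-S(k,0))$, the particle--hole symmetry that folds the sum onto $k\ge0$, Rost's hydrodynamic limit for the bulk contribution, and Poisson control of the rightmost particle for the tail $k>Ct$. The only differences are implementation details (you derive convergence of means via uniform integrability of $N_t^{(1)}/t$ rather than quoting Rost's $L_1$ statement, and you use Poisson concentration instead of the Stirling estimate on $t^{k}/k!$), neither of which changes the argument.
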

	\proof

	To facilitate the proof, we represent $P_t$ as follows.
	For $t\ge0$ we visualize the graph of the function $x\mapsto S(\lf x\rf,t)$ as columns of squares of unit size on the upper half-plane whose heights are decreasing (by no more than $1$) with $x$ (see Figure~\ref{fig_Skt}). When a $j$th particle jumps a new square is added at the right end of the $j$th row. With this interpretation, the total number of jumps by the process
	up to time $t$ is the number of added squares, i.~e., the area between the graphs of $S(\lf \cdot\rf,t)$ and $S(\lf \cdot\rf,0)$.

	That is,
	\bel{pt}P_t:=\sum_{k\in \Z}(S(k,t)-S(k,0)).\ee

	Consequently,
		\begin{align*}
		\mathds E\,P_t&=\E\,\sum_{k\in \Z}(S(k,t)-S(k,0))=\int_\R\,\mathds E\,(S(\lf v\rf,t)-S(\lf v\rf ,0))dv\\ &=t\int_\R\,\mathds E\,(S(\lf ut\rf,t)-S(\lf ut\rf ,0))du=t^2\int_\R\,\mathds E\,\left(\frac{S(\lf ut\rf,t)-S(\lf ut\rf ,0)}t\right)du.
	\end{align*}
	
	\begin{figure} 
		
		\begin{picture}(100,200)(120,20)
			
			\thinlines
			\color{black}
			\put(340,50){\vector(0,10){120}}
			\put (450,50){\vector(4,0){10}}
			\put(400,50){\line(0,10){20}}
			\put(380,50){\line(0,10){40}}
			\put(360,50){\line(0,10){40}}
			\put(320,50){\line(0,10){40}}
			\put(300,50){\line(0,10){60}}
			\put(280,50){\line(0,10){80}}
			\put(260,50){\line(0,10){80}}
			\put(240,50){\line(0,10){100}}
			%
			\put(240,50){\line(10,0){100}}
			\put(240,130){\line(10,0){20}}
			\put(240,110){\line(10,0){40}}
			\put(240,90){\line(10,0){60}}
			\put(240,70){\line(10,0){140}}

			\thicklines
			\put(420,50){\line(0,10){20}}
			\put(380,70){\line(0,10){20}}
			\put(320,90){\line(0,10){20}}

			\put(420,50){\line(10,0){40}}
			\put(300,110){\line(10,0){20}}
			\put(380,70){\line(10,0){40}}
			\put(320,90){\line(10,0){60}}
			\color{blue}
			\put(340,50){\line(4,0){110}}
			\color{black}
			\color{blue}
			\put(340,50){\line(0,10){20}}
			\put(320,70){\line(10,0){20}}
			\put(320,70){\line(0,10){20}}
			\put(300,90){\line(10,0){20}}
			\put(300,90){\line(0,10){20}}
			
			\put(280,110){\line(0,10){20}}
			\put(260,130){\line(0,10){20}}
			
			\put(260,130){\line(10,0){20}}
			\put(240,150){\line(10,0){20}}
			\put(280,110){\line(10,0){20}}

			\color{black}
			\put(417,40){4}
			\put(397,40){3}
			\put(377,40){2}
			\put(357,40){1}
			\put(337,40){0}
			\put(315,40){-1}
			\put(295,40){-2}
			\put(275,40){-3}
			\put(255,40){-4}
			\put(235,40){-5}

		\end{picture}
		
		\caption{Graphs  of $S(k,0)$ (blue) and $S(k,t)$ (black) after eight jumps: four by the first particle, three by the second, and one by the third. The area of the differences $S(k,t)-S(k,0)$, $k\in \Bbb Z$, is the number of jumps by the process up to time $t$.\label{fig_Skt}}
	\end{figure}
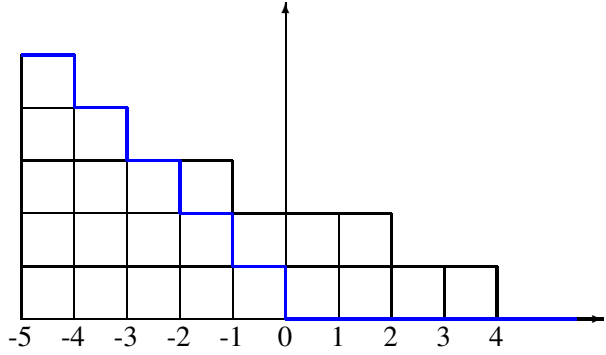

	Therefore, we need  to show that 
	\bel{main_lim}\lim_{t\to\infty} \int_\R\,\mathds E\,\left(\frac{S(\lf ut\rf,t)-S(\lf ut\rf ,0)}t\right)du=\frac16.
	\ee
	As was shown in \cite[Proposition~2]{r}
	for every $u\in\R$, $S(\lf ut\rf,t)/t$ converges almost surely and in $L_1$ to $h(u)$, defined by 
	\bel{h} h(u)=\left\{\begin{array}{ll}-u,& u<-1;\\ \frac14(1-u)^2,&-1\le u\le 1;\\0,&u>1.\end{array} \right.\ee
	
	To complete the proof, we will justify passing with the limit under the integral sign in \eqref{main_lim}. In order to do it, we confine ourselves to the integration over the positive half--line. To this end, first note that by symmetry of the particles and empty sites  (which we will refer to as 'holes'), for every $t\ge0$, we have
	\begin{equation}\label{XX}(X(j,t))_{j\in\Z}\stackrel d= (1-X(-j+1,t))_{j\in\Z},\end{equation}
	where the symbol \lq\lq$\stackrel d=$\rq\rq means equality in distribution. Further, if we let 
	\[S'(k,t):=\sum_{j<k}(1-X(j,t))\]
	be the number of holes to the left of $k$ then for all $t\ge0$ and $k\in\Z$
	\begin{equation}\label{SkS}S'(k,t)=k-1+S(k-1,t).\end{equation}
	This is because the equality holds at $t=0$ and when a particle (at a position $k$, say) jumps to the right, the number of holes to the left of $k+1$ increases by 1.

	Moreover, in view of \eqref{XX}, for every $t\ge 0$ we have
	\begin{equation}\label{SS} (S'(-k,t))_{k\in\mathbb Z}\stackrel{d}{=}(S(k+1,t))_{k\in\mathbb Z}. \end{equation} 
		Therefore, for any $u_0\ge0$
	\begin{align*}&\int_{-\infty}^{-u_0}\,\mathds E\,\frac{S(\lf ut\rf,t)-S(\lf ut\rf,0)}tdu\stackrel{\eqref{SkS}}{=}\int_{-\infty}^{-u_0}\,\mathds E\,\frac{S'(\lf ut\rf+1,t)+\lf ut\rf-\lf ut\rf}tdu\\&\quad=
		\int_{u_0}^\infty\,\mathds E\,\frac{S'(\lf -ut\rf+1,t)}tdu\stackrel{\eqref{SS}}{=}\int_{u_0}^{\infty}\,\mathds E\,\frac{S(\lf ut\rf+1,t)}tdu\\&\quad=\int_{u_0}^{\infty}\,\mathds E\,\frac{S(\lf ut\rf,t)}tdu-\int_{u_0}^\infty\,\mathds E\,\frac{X(\lf ut\rf+1,t)}tdu.
	\end{align*}
		Note that in the second line above, when we referred to \eqref{SS}, we first used at the left--hand side the identity $\lfloor -ut\rfloor+1=-\lfloor ut\rfloor$ for $ut\notin\mathbb Z$. 
	
	Similarly,
	$$\int_{-u_0}^0\,\mathds E\,\frac{S(\lf ut\rf,t)-S(\lf ut\rf,0)}tdu=\int_0^{u_0}\,\mathds E\,\frac{S(\lf ut\rf,t)}tdu-\int_0^{u_0}\,\mathds E\,\frac{X(\lf ut\rf+1,t)}tdu.$$

	Now, $X(\lf ut\rf+1,t)=1$ means that there is a particle at the position $\lf ut\rf+1$ at the time $t$. This, in turn, means that the first particle is to the right of $\lf ut\rf$ at the time $t$. 
	Since the first particle  jumps according to the Poisson process, we have
\[\int_{u_0}^\infty\,\mathds E\, X(\lf ut\rf+1,t)du\le \int_0^\infty\,\mathds P(N_t^{(1)}\ge\lf ut\rf)du\le 
1.
\]
Likewise
\[
\int_0^{u_0}\,\mathds E\, X(\lf ut\rf+1,t)du\le 
1.
\]
Hence, if for any $u_0>0$ we write
\bel{split}\int_\R\,\mathds E\,\frac{S(\lf ut\rf,t)-S(\lf ut\rf,0)}tdu=\left(\int_{|u|\le u_0}+\int_{|u|>u_0}\right)\left(\,\mathds E\,\frac{S(\lf ut\rf,t)-S(\lf ut\rf,0)}t\right)du\ee
then 
(note that $S(\lf ut \rf,0)=0$ for $ut>0$)
\begin{align}\label{split_small}\int_{|u|\le u_0}\,\mathds E\,\frac{S(\lf ut\rf,t)-S(\lf ut\rf,0)}tdu&=2\int_0^{u_0}\,\mathds E\,\frac{S(\lf ut\rf,t)}tdu+O(t^{-1}), \\
	\int_{|u|> u_0}\,\mathds E\,\frac{S(\lf ut\rf,t)-S(\lf ut\rf,0)}tdu&=2\int_{u_0}^\infty\,\mathds E\,\frac{S(\lf ut\rf,t)}tdu+O(t^{-1}), \mbox{as}\quad t\to\infty.\label{split_big}
\end{align} 
We first handle the integral  \eqref{split_big}.  Consider  $u>u_0$ with $u_0>0$ to be specified later. 

By Lemma~\ref{lem_exp_skt} the integrand on the right--hand side of \eqref{split_big} is bounded by $t^{\lf ut\rf}/\lf ut\rf!$. From Stirling's approximation
\[n!=\sqrt{2\pi n}\left(\frac ne\right)^n\left(1+O(n^{-1})\right)
\]  
it is clear that there exist a $u_0>1$ and $\alpha>0$ and $C>0$ such that this integrand is further bounded by 
\[
\frac{t^{\lf ut\rf}}{(\lf ut\rf)!}=\frac 1{\sqrt{2\pi\lf ut\rf}} 
\left(\frac{et}{\lf ut\rf}\right)^{\lf ut\rf}\left(1+O(\lf ut\rf^{-1}\right)\le C\frac{
	e^{-\alpha tu}}{\sqrt{ tu}} ,\]
uniformly over $u\ge u_0$ whenever $t$ is bounded away from zero. Consequently, 
\bel{bound_large_u}\int_{u_0}^\infty\,\mathds E\,\frac{S(\lf ut\rf,t)}tdu\le C\int_{u_0}^\infty
\frac{e^{-\alpha ut}}{\sqrt{tu}}du=o(1),\quad \mbox{as}\quad t\to\infty
\ee
which shows that the contribution of \eqref{split_big} is negligible.

We now turn our attention to \eqref{split_small}. 
The integrand on the right hand side of \eqref{split_small} is bounded by the integrable function $\II_{[0,u_0]}$ since by the monotonicity of $S(k,t)$ in $k$ and  Lemma~\ref{lem_exp_skt}
\[\mathds E\,S(\lf ut\rf,t)\le \mathds E\, S(0,t)\le \, t.
\]
Therefore, by the dominated convergence theorem
\[\lim_{t\to\infty}\int_{0}^{u_0}\,\mathds E\,\frac{S(\lf ut\rf,t)}tdu=\int_{0}^{u_0}\lim_{t\to\infty}\,\mathds E\frac{S(\lf ut\rf,t)}tdu
=\int_0^{u_0}h(u)du
=\frac14\int_{0}^1(1-u)^2du
=\frac1{12}.\]

Combining this with \eqref{split}--\eqref{split_big} and \eqref{bound_large_u} proves \eqref{main_lim}.
\endproof

\section{Number of active particles}

Let us denote by $a_t$ number of active particles  at time $t\ge 0$, i.e. 
\begin{equation}\label{repra}
	a_t=\sum_{k\in \mathbb Z}\,X(k,t)(1-X(k+1,t)).
\end{equation}

\begin{theorem}\label{thm_act_part}
	We have
	\[\lim_{t\to\infty}\frac{\mathds E\,a_t}t=\frac13.\]
\end{theorem}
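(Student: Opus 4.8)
The plan is to follow closely the proof of Theorem~\ref{pole}. Abbreviating the expected current across the bond $(k,k+1)$ by $j(k,t):=\mathbb{E}[X(k,t)(1-X(k+1,t))]$, the representation \eqref{repra} and the Riemann-sum identity $\frac1t\sum_k f(k)=\int_\R f(\lf ut\rf)\,du$ give
\[\frac{\mathbb{E}\,a_t}{t}=\int_\R j(\lf ut\rf,t)\,du.\]
Because $0\le X(k,t)(1-X(k+1,t))\le 1$, the integrand is dominated by the integrable function $\mathds{1}_{[-u_0,u_0]}$ on any compact window, while on the tails I would proceed as in \eqref{bound_large_u}: for $u>u_0>1$ one has $j(\lf ut\rf,t)\le\mathbb{E}\,X(\lf ut\rf,t)\le\mathbb{E}\,S(\lf ut\rf-1,t)$, so Lemma~\ref{lem_exp_skt} together with Stirling's formula provides an integrable exponential bound; the symmetric tail $u<-u_0$ is treated identically after converting a hole at $\lf ut\rf+1$ into a particle at $-\lf ut\rf$ via the particle--hole symmetry \eqref{XX}. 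Hence the tails are negligible, and it suffices to identify the pointwise limit of $j(\lf ut\rf,t)$ and invoke dominated convergence.

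The heart of the argument is the elementary observation that the rate at which $S(k,\cdot)$ grows equals the rate at which a particle crosses the bond $(k,k+1)$:
\[\frac{d}{dt}\,\mathbb{E}\,S(k,t)=\mathbb{E}[X(k,t)(1-X(k+1,t))]=j(k,t),\]
which also yields at once the relation $\mathbb{E}\,a_t=\frac{d}{dt}\mathbb{E}\,P_t$ promised in Theorem~\ref{thm_active}. Feeding in Rost's scaling limit $\mathbb{E}\,S(\lf ut\rf,t)/t\to h(u)$ in the form $\mathbb{E}\,S(k,t)\approx t\,h(k/t)$ and differentiating at fixed $k$ suggests, for $u\in(-1,1)$,
\[j(\lf ut\rf,t)\longrightarrow h(u)-u\,h'(u)=\tfrac14(1-u^2),\]
and $j(\lf ut\rf,t)\to0$ for $|u|>1$; here the evaluation uses \eqref{h}, and one checks that $\tfrac14(1-u^2)=\rho(u)(1-\rho(u))$ with $\rho(u)=-h'(u)=\tfrac12(1-u)$ the limiting particle density, so the limit is precisely the local-equilibrium current. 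Granting this, dominated convergence closes the proof, since $\tfrac14(1-u^2)$ is even and $\int_{-1}^1\tfrac14(1-u^2)\,du=\tfrac13$.

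The one genuinely delicate point is the pointwise limit for $j(\lf ut\rf,t)$, that is, interchanging the $t$-derivative with Rost's scaling limit: the $L_1$ convergence of \cite[Proposition~2]{r} pins down $\mathbb{E}\,S$ only to leading order and does not by itself license differentiation in $t$. I foresee two ways around this. The first is to establish the local limit directly, by proving the one-point density convergence $\mathbb{E}\,X(\lf ut\rf,t)\to\rho(u)$ and the asymptotic decoupling $\mathbb{E}[X(\lf ut\rf,t)X(\lf ut\rf+1,t)]-\mathbb{E}\,X(\lf ut\rf,t)\,\mathbb{E}\,X(\lf ut\rf+1,t)\to0$, i.e. a local-equilibrium estimate. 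The second stays entirely at the level of expectations: use $\mathbb{E}\,a_t=\frac{d}{dt}\mathbb{E}\,P_t$ and extract the asymptotics of the derivative from Theorem~\ref{pole} by a convexity (Tauberian) argument --- if $\mathbb{E}\,P_t$ is convex then comparing its values at $t$ and at $(1\pm\eps)t$ squeezes $\mathbb{E}\,a_t/t$ between $\tfrac{2-\eps}{6}$ and $\tfrac{2+\eps}{6}$ as $t\to\infty$, forcing the limit $\tfrac13$ upon letting $\eps\downarrow0$. In this latter route the remaining obstacle is to prove convexity of $\mathbb{E}\,P_t$ (equivalently, monotonicity of $\mathbb{E}\,a_t$), which is not immediate, since the change of $a_t$ at a single jump is a signed function of the two neighbouring sites.
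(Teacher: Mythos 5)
Your overall architecture matches the paper's: split the sum \eqref{repra} into a window $|k|<k_t$ with $k_t\sim\beta t$ plus tails, kill the tails with Lemma~\ref{lem_exp_skt}, Stirling, and particle--hole duality for the left tail, and identify the limit on the window by dominated convergence. The tail estimates and the final computation $\int_{-1}^{1}\rho(u)(1-\rho(u))\,du=\int_{-1}^{1}\tfrac14(1-u^2)\,du=\tfrac13$ are correct and agree with the paper's $\int_{-1}^1\bigl(f(u)-f^2(u)\bigr)du$.

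However, there is a genuine gap, and it is exactly the one you flag yourself: the pointwise limit $\mathbb{E}\bigl[X(\lf ut\rf,t)(1-X(\lf ut\rf+1,t))\bigr]\to\tfrac14(1-u^2)$ is the entire analytic content of the theorem, and neither of your two proposed routes delivers it. Differentiating the scaling relation $\mathbb{E}\,S(\lf ut\rf,t)\approx t\,h(k/t)$ in $t$ is only a heuristic: $L_1$ convergence of $S(\lf ut\rf,t)/t$ controls $\mathbb{E}\,S$ to leading order and says nothing about its $t$-derivative. Your first repair --- one-point density convergence plus asymptotic decoupling of the two-point function --- is precisely what is needed, but you do not prove it or point to a source; this is a local-equilibrium statement, not a soft consequence of anything established so far. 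The paper closes exactly this gap by invoking Rost: \cite[Theorem~1]{r} gives $\tfrac1t\sum_{ut<k<vt}X(k,t)\to\int_u^v f$ almost surely (hence \eqref{int_f} after bounded convergence), and \cite[(1) and (3) of Section~4]{r} give the two-point limit $\mathbb{E}\,X(\lf ut\rf,t)X(\lf ut\rf+1,t)\to f^2(u)$ of \eqref{ros1}, i.e.\ the decoupling you need. Your second route (a Tauberian argument from Theorem~\ref{pole} via $\mathbb{E}\,a_t=\tfrac{d}{dt}\mathbb{E}\,P_t$) founders on the unproven convexity of $\mathbb{E}\,P_t$; the paper's Section on slow decrease addresses precisely this obstruction and shows that the regularity one can actually verify for $\mathbb{E}\,a_t/t$ is itself proved using the same Rost inputs, so this route is circular as a way of avoiding them. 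In short: the skeleton is right, but the theorem is not proved until the local-equilibrium limit for the bond occupation is supplied, and that requires importing Rost's results (or reproving them).
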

\proof
We write,
\[\mathds E\, a_t=\,\mathds E\,\sum_{k\in\Z}X(k,t)(1-X(k+1,t))=\,\mathds E\,\left(\sum_{|k|< k_t}+\sum_{|k|\ge k_t}\right)X(k,t)(1-X(k+1,t))
\]
for $k_t$ which will be  chosen later.

Consider first
\[\mathds E\,\sum_{k\ge k_t}X(k,t)(1-X(k+1,t))\le \,\mathds E\,\sum_{k\ge k_t}X(k,t)=\mathds E\,S(k_t-1,t)
\le \frac{t^{k_t}}{(k_t-1)!}
\]
where the last inequality is by Lemma~\ref{lem_exp_skt}. 
Choosing $k_t\sim \beta t$ for sufficiently large (but fixed) $\beta>1$ and using Stirling approximation as in an earlier argument 
we conclude that the quantity on the right vanishes as $t\to\infty$, that is
\[\,\mathds E\,\sum_{k\ge k_t}X(k,t)(1-X(k+1,t))=o(1),\quad t\to\infty.\]

Reversing the roles of $X(k,\, \cdot\, )$ and $1-X(k+1,\, \cdot\,)$ and using the migration of the first hole rather than the first particle we similarly get that 
\[\mathds E\,\sum_{k\le-k_t}X(k,t)(1-X(k+1,t))=o(1),\quad t\to\infty.\]
It remains to asymptotically evaluate
\[
\frac1t\E\,\sum_{|k|< k_t}X(k,t)(1-X(k+1,t))=\frac1t\E\, \sum_{|k|< k_t}X(k,t)-\frac1t\E\,\sum_{|k|< k_t}X(k,t)X(k+1,t).
\]
By \cite[Theorem~1]{r}, as $t\to\infty$
\bel{as}\frac1t\sum_{ut<k<v t}X(k,t)\stackrel{a.s.}{\longrightarrow} \int_u^vf(w)dw,\ee
where 
\begin{equation}\label{ef}f(w)=\left\{\begin{array}{ll}\frac12(1-w),&  |w|\le1;\\1,&w<-1;\\0,&w>1.\end{array}\right.\end{equation}
By the bounded convergence theorem, the expected value of the left--hand side of \eqref{as} converges as well. Therefore,
\bel{int_f}
\frac1{t}\,\mathds E\,\sum_{|k|<k_t}X(k,t)=\frac1{t}\,\mathds E\,\sum_{-\beta t<k<\beta t}X(k,t)
\longrightarrow\int_{-\beta}^\beta f(w)dw
\ee
as $t\to\infty$. 

Further, Rost proved that (see \cite[(1) and (3) of Section~4]{r}; note the ambiguity of display enumeration across the sections in \cite{r}), in our notation
\begin{equation}\label{ros1}
	\lim_{t\to\infty}\,\mathds E\,X(\lf ut\rf,t)X(\lf ut\rf+1,t)\to f^2(u),
\end{equation}
where $f$ is  given in \eqref{ef}.  Therefore, using $k_t\sim \beta t$ again we have
\begin{align}&\frac1{t}\,\mathds E\,\sum_{|k|< k_t}X(k,t)X(k+1,t)=\frac1{t}\,\mathds E\,\int_{-\beta t}^{\beta t}X(\lf v\rf,t)X(\lf v\rf+1,t)dv\nonumber\\&\quad
	=\int_{-\beta }^{\beta }\,\mathds E\,X(\lf ut\rf,t)X(\lf ut\rf+1,t)du\longrightarrow\int_{-\beta}^\beta f^2(u)du\label{Rostf2}
\end{align}
as $t\to\infty$. 
We can exchange the order of the integrals by non--negativity and we can take the limit inside by the bounded convergence theorem since the integrands are bounded by $\II_{[-\beta,\beta]}(u)$.

Combining this with \eqref{int_f}  and recalling that $\beta>1$  we obtain
\[\frac1{t}\,\mathds E\,\sum_{|k|< k_t}X(k,t)(1-X(k+1,t))\longrightarrow \int_{-\beta}^{\beta}(f(u)-f^2(u))du=\int_{-1}^{1}\left(\frac{1-u}2-\frac{(1-u)^2}4\right)du=\frac13.
\]
\endproof

\section{A connection}
As we mentioned earlier, it is natural to expect  a connection between the speed with which the process moves at a given time and the number of active particles at that time. 
Since we have not seen any formal statement in the literature linking these two quantities we include a statement and  its proof here. 


\begin{theorem}\label{thm_active}Let $g(t)=\mathds E\,P_t$, $t\ge 0$. Then
	\begin{equation}\label{derivative}
		g'(t)=\E\,a_t,\quad t>0.
	\end{equation}	
\end{theorem}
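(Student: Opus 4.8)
The plan is to exploit the continuous-time Markov structure of the TASEP, in which $P_t$ is simply the counting process of all jumps and its instantaneous rate equals the current number of active particles. Indeed, in a configuration $\eta$ there are exactly $a(\eta)$ independent $\mathrm{Exponential}(1)$ clocks whose ringing produces a jump (one per active particle), so while the process sits in $\eta$ the total rate at which $P_t$ increases is $a(\eta)$. The goal is thus to promote this infinitesimal statement to the integrated identity
\[
\E\,P_t=\int_0^t \E\,a_s\,ds,\qquad t>0,
\]
and then to differentiate it. Note that $\E\,P_t<\infty$ is available from Theorem~\ref{pole} (or directly from Lemma~\ref{lem_exp_skt}), so all quantities below are finite.

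To obtain the integrated identity I would decompose $P_t=\sum_{j\ge1}N_t^{(j)}$, where $N_t^{(j)}$ counts the jumps of the $j$th particle up to time $t$. Each $N^{(j)}$ is a counting process that advances at rate $1$ exactly when the $j$th particle is active, i.e. its stochastic intensity is $\II[\,j\text{th particle active at time }s\,]$. A direct application of Dynkin's formula (equivalently, the compensator of a counting process driven by the clocks) then shows that
\[
N_t^{(j)}-\int_0^t \II[\,j\text{th particle active at }s\,]\,ds
\]
is a mean-zero martingale, whence $\E\,N_t^{(j)}=\int_0^t \P(j\text{th particle active at }s)\,ds$. Summing over $j$ and using nonnegativity (Tonelli) together with $\sum_{j}\II[\,j\text{th particle active at }s\,]=a_s$ yields the integrated identity.

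Next I would show that $s\mapsto\E\,a_s$ is continuous, which is what upgrades the identity to the pointwise derivative statement. Between consecutive jumps $a_s$ is constant, and a single jump at site $k$ only alters the occupancies at $k$ and $k+1$, hence changes $a_s=\sum_{k\in\Z}X(k,s)(1-X(k+1,s))$ by at most a fixed constant. Therefore $\E|a_{s+h}-a_s|\le C\,\E[\,\text{number of jumps in }(s,s+h]\,]=C\,(g(s+h)-g(s))$, and the right-hand side tends to $0$ as $h\to0$ because, with probability one, no jump occurs exactly at time $s$ and there are only finitely many jumps in a bounded window (so the count decreases to $0$ and dominated convergence applies). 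This gives continuity of $\E\,a_s$, and the fundamental theorem of calculus applied to $\E\,P_t=\int_0^t\E\,a_s\,ds$ then delivers $g'(t)=\E\,a_t$ for every $t>0$.

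The main obstacle I anticipate is not the heuristic, which merely says that the jump rate is the number of active particles, but the rigorous handling of the interchanges of summation, expectation, and limit, all of which require integrability control. Because for the step initial condition there is no deterministic bound on $a_s$, one must lean on the tail estimates of Lemma~\ref{lem_exp_skt} (ultimately on the fact that the rightmost particle performs a Poisson process) to dominate the contribution of configurations with many active particles, both in passing to the integrated identity and in bounding $g(s+h)-g(s)$ uniformly enough to conclude continuity of $\E\,a_s$.
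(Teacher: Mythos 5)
Your argument is correct in outline but follows a genuinely different route from the paper. The paper computes the right difference quotient $g(t+h)-g(t)=\sum_{k\in\Z}\sum_{m\ge1}\P(S(k,t+h)-S(k,t)\ge m)$ directly, partitions each crossing event according to the occupancy of sites $k,k+1$ at time $t$ (the events $A_k,B_k,C_k,D_k$), and shows by hand that the only first--order contribution is $(1-e^{-h})\,\E\,a_t$ coming from a single active particle making a single jump, all multi--jump and non--active configurations contributing $O(h^2)$ or $o(h)$; this is elementary (only Poisson tail bounds and independence of the clocks) but requires a lengthy case analysis over the sign of $k$ and the location of the first particle or first hole. You instead invoke the compensator of each counting process $N^{(j)}$, whose stochastic intensity is the indicator that the $j$th particle is active, to get the integrated identity $\E\,P_t=\int_0^t\E\,a_s\,ds$ in one stroke, and then differentiate using continuity of $s\mapsto\E\,a_s$. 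This is cleaner and yields slightly more (the integrated identity and continuity of $g'$), at the cost of importing the martingale theory of point processes that the paper deliberately avoids. Two details in your sketch deserve to be made explicit. First, the compensated process is a true martingale (not merely local) because the intensity is bounded by $1$, so $\E\,N_t^{(j)}\le t$; this is what licenses taking expectations. Second, your continuity argument bounds $\E|a_{s+h}-a_s|$ by a constant times $g(s+h)-g(s)$ and then needs this to vanish, which is most directly obtained from the integrated identity itself together with a local bound such as $\E\,a_u\le 1+2\,\E\,S(0,u)\le 1+2u$ (every active particle at a nonpositive site sits to the left of a hole at a site $\le 0$, and their number is $S(0,u)$ by the particle--hole identity \eqref{SkS}, while active particles at positive sites are counted by $S(-1,u)$); phrased this way the argument is not circular. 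With those two points supplied, your proof is complete and is a legitimate alternative to the one in the paper.
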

\begin{proof}
	We first consider the case of the right derivative. That is, we  take $h>0$.
	Note that by \eqref{pt}
	\[g(t+h)-g(t)=\mathds E\,(P_{t+h}-P_t)=\mathds E\,\sum_{k\in\Z}(S(k,t+h)-S(k,t))
	.\]
	Set
	\[\D_{k,t,h}:=S(k,t+h)-S(k,t).\]
	Then,
	\bel{exp_delta}g(t+h)-g(t)=\sum_{k\in\Z}\,\mathds E\,\D_{k,t,h}=\sum_{k\in\Z}\sum_{m\ge1}\,\mathds P(\D_{k,t,h}\ge m).\ee
	To evaluate these probabilities, for a given  $k$ and $t$  consider the following events (where we suppress the dependence on $t$ in the notation):
	\begin{align*}
		A_k&=\left\{\dots \stackrel k\bullet \circ \dots \right\} \quad\mbox{(there is a particle on the $k$th position but not on the $(k+1)$st at time $t$})
		\\B_k&=\left\{\dots \stackrel k\circ\bullet\dots \right\} \quad\mbox{(no particle on the $k$th position but there is one on the $(k+1)$st at time $t$})
		\\C_k&=\left\{\dots\stackrel k\bullet\bullet\dots \right\}\quad\mbox{(both the $k$th and the $(k+1)$st positions are occupied at time $t$})
		\\D_k&=\left\{\dots \stackrel k\circ\circ\dots\right\}\quad\mbox{(both the $k$th and the $(k+1)$st positions are empty at time $t$}).
	\end{align*}
	Then, the probability on the right hand side of \eqref{exp_delta} is the sum  
	\bel{four}\,\mathds P(\D_{k,t,h}\ge m, A_k)+\,\mathds P(\D_{k,t,h}\ge m, B_k)+\,\mathds P(\D_{k,t,h}\ge m, C_k)+\,\mathds P(\D_{k,t,h}\ge m,D_k).\ee
	We will show that the dominant contribution to the right hand side of \eqref{exp_delta} comes from the case
	\[\sum_{k\in\Z}\,\mathds P(\D_{k,t,h}\ge1,A_k).\]
	We have
	\[\mathds P( \D_{k,t,h}\ge1,A_k)=\mathds P(\D_{k,t,h}\ge1|A_k)\,\mathds P(A_k)=(1-\,\mathds P(\D_{k,t,h}=0|A_k))\,\mathds P(A_k).\]
	Given $A_k$, $\D_{k,t,h}=0$ means that the particle that is on the $k$th position at a time $t$ does not move in the time interval  $[t,t+h]$ which happens with probability $e^{-h}$.
	Thus,
	\bel{Akm=1}\sum_{k\in\Z}\,\mathds P(\D_{k,t,h}\ge1,A_k)=(1-e^{-h})\sum_{k\in\Z}\,\mathds P(A_k)=(1-e^{-h})\,\mathds E\, \sum_{k\in\Z}\II_{A_k}=(1-e^{-h})\E\, a_t\ee
	which asymptotically as $h\to0$ is $h\,\mathds E\, a_t$. Thus, the theorem will be proved once we show that all other terms are of lower order as $h\to0$. 
		
	For $m\ge2$ and $k\ge0$ the event $\{ \D_{k,t,h}\ge m,A_k\}$ means that at least $m$ particles crossed position $k$ between $t$ and $t+h$ and that the first particle made at least $k$ jumps by the  time $t$ (at time $t$ there is a particle on the position $k$, so it must have made at least $k$ jumps, and hence the first one must have made at least that many jumps). For these things to happen, the first particle must have made at least $k$ jumps by the time $t$, the particle at the position $k$ at the time $t$ must have jumped between $t$ and $t+h$ and the  clocks of  each of  the $m-1$  particles  closest to $k$ from the left must have  rung at least once each 
	between $t$ and $t+h$ (some of their jumps may have been blocked, but the clocks must have rung). Since all these events are independent, we see that 
	\bel{kge0}\P(\D_{k,t,h}\ge m,A_k)\le (1-e^{-h})^m\P(N_t^{(1)}\ge k).
	\ee
	
	If $k<0$, $\{ \D_{k,t,h}\ge m,A_k\}$ means that at least $m$ particles crossed position $k$ between $t$ and $t+h$ and that the first hole made at least $k$ moves by the  time $t$. Throughout the remainder of the  proof by the first hole  we mean  the leftmost hole, i.e.  the hole positioned at $1$ when $t=0$; the number of moves by that hole up to time $t$ is denoted by $H_t^{(1)}$  and, in view of the particle-hole duality, $(H_t^{(1)})$ is a Poisson process.
	Consequently,  arguing as for \eqref{kge0}, we see that 
	\[\P(\D_{k,t,h}\ge m,A_k)\le (1-e^{-h})^m\P(H_t^{(1)}\ge -k).
	\]
	
	Clearly, $N_t^{(1)}\stackrel{d}{=}H_t^{(1)}\stackrel{d}{=}\PP_t$, where  $\PP_t$ is a $\rm{Poisson}(t)$  random variable. Thus it follows from this and \eqref{kge0} that 
	\begin{align}\nonumber
		\sum_{m\ge2}\sum_{k\in\Z}\P(\D_{k,t,h}\ge m,A_k)
		&\le(1-e^{-h})^2e^h\left(\sum_{k\ge0}\P(N_t^{(1)}\ge k)+\sum_{k\ge1}\P(H_t^{(1)}\ge k)\right)&\\
		\label{Akm_large}\\&=(1-e^{-h})^2e^h(1+2\E\,\PP_t)
		=(1-e^{-h})^2e^h(1+2t),\nonumber\end{align}
	which is of order $h^2$ as $h\to0$.

	The argument for $B_k$ 
	is virtually the same. Here, for at least $m$ crosses of $k$ between $t$ and $t+h$, the clock of the particle in position $k+1$  must ring in this interval and the clocks of the $m$ preceding particles must ring between $t$ and $t+h$. Also, since there is a particle in position $k+1$ at time $t$ (a hole in position $k$ if $k<0$), the first particle   must have made at least $k+1$ jumps by the time $t$ (or the first hole must have made at least $-k+1$ moves if $k<0$). Using independence  and summing over $m\ge1$ and $k\in\Z$ we get 
	\begin{align}\nonumber\sum_{m\ge1}\sum_{k\in\Z}\P(\D_{k,t,h},B_k)&\le
		\sum_{m\ge1}(1-e^{-h})^{m+1} \left(\sum_{k\ge0} \P(N_t^{(1)}\ge k+1)+\sum_{k\ge1}\P(H_t^{(1)}\ge k+1)\right)\\&\label{Bk}\\&\le2(1-e^{-h})^2e^ht.\nonumber\end{align}
	
	The argument for $C_k$ and $D_k$ is a bit more delicate since  we do not know where the first particle (in the case of  $D_k$) or the  first  hole (in the case of $C_k$) at time $t$ is.  
	
	Consider first $k<0$ for $D_k$.  For  $m$ crosses of $k$ between $t$ and $t+h$, each of the clocks of the $m$ particles closest to $k$ from the left must  ring at least twice between $t$ and $t+h$. Further, since the first hole is in position at most $k$ at the time $t$,   $H_t^{(1)}\ge -k+1$ on $D_k$.
	Therefore, for $k<0$
	\bel{Dkminus}\P(\D_{k,t,h}\ge m,D_k)\le (1-e^{-h})^{2m}\P(H_t^{(1)}\ge -k+1).
	\ee
	Consider now $k\ge0$. 
	We split $D_k$ according to whether the position of the first particle at the time $t$ is to the left  or to the right of $k$:
	\[D_k=D_k^{>}\cup D_k^{<},\quad\mbox{where}\quad D_k^{>}=D_k\cap\{N_t^{(1)}\ge k+2\},\quad D_k^{<}=D_k\cap\{N_t^{(1)}< k\}.\]
	Thus, by independence
	\bel{Dk>}\P(\D_{k,t,h}\ge m, D_k^{>})\le (1-e^{-h})^{2m}\P(N_t^{(1)}\ge k+2).
	\ee   
	It follows from \eqref{Dkminus} and \eqref{Dk>} that 
	\[\sum_{m\ge1}\sum_{k\in\Z}\P(\D_{k,t,h}\ge m,D_k)\le \frac{(1-e^{-h})^2e^h}{2-e^{-h}}(2\E\,\PP_t-1)+\sum_{m\ge1}\sum_{k\ge0}\P(\D_{k,t,h}\ge m,D_k^<).
	\]
	The first term on the right--hand side is $O(h^2)$ so it remains to show that 
	\bel{Dk<}\sum_{m\ge1}\sum_{k\ge0}\P(\D_{k,t,h}\ge m,D_k^<)=o(h).
	\ee
	First note that the inner sum is in fact over  $k\ge 1$ (as the leftmost position, the first particle can ever be at, is 0). At time $t$ the first particle can be at any position $i$, $0\le i<k$,  and must be to the right of position $k+m-1$ at time $t+h$ (to make room to the right of  position  $k$ for at least $m-1$ particles). In other words, the first particle makes $i$ jumps up to time $t$ and at least  $k+m-i$ additional jumps between $t$ and $t+h$. Probability of this event is 
	\[\sum_{i=0}^{k-1}\P(N_t^{(1)}=i, N^{(1)}_{t+h}-N^{(1)}_t\ge k+m-i)=\sum_{i=0}^{k-1}\P(N_t^{(1)}=i)\P(\PP_h\ge k+m-i)
	\]
	where we have used the independence of $N_t^{(1)}$ and $N_{t+h}^{(1)}$ and where $\PP_h$ is a  $\mathrm{Poisson}(h)$ random variable. We split the sum on the right as
	\begin{align}\nonumber&\left(\sum_{i=0}^{\lf k/2\rf}+\sum_{i=\lf k/2\rf+1}^{k-1}\right)\P(N^{(1)}_t=i)\P(\PP_h\ge k+m-i)\\&\quad \nonumber
		\le \sum_{i=0}^{\lf k/2\rf}\P(N^{(1)}_t=i)\P(\PP_h\ge k+m-\lf k/2\rf)+
		\sum_{i=\lf k/2\rf+1}^{k-1}\P(N^{(1)}_t=i)\P(\PP_h\ge m+1)\\&\le \P(\PP_h\ge k+m-\lf k/2\rf)+\P(N^{(1)}_t\ge\lf k/2\rf+1)\P(\PP_h\ge m+1).\label{2sums}
	\end{align}
	Summing the above expressions over $k,m\ge1$ and using 
	\[\P(\PP_h\ge r)\le\sum_{i\ge r}h^i=\frac{h^r}{1-h}\]
	and $\lf k/2\rf+1\ge k/2$ we see that the 
		double 
	sum of the second terms in \eqref{2sums} is bounded by 
	\[\sum_{k\ge1}\P(N_t^{(1)}\ge k/2)\sum_{m\ge1}\frac{h^{m+1}}{1-h}=2\E\,N_t^{(1)}\frac{h^2}{(1-h)^2}=2t\frac{h^2}{(1-h)^2}.\]

	For the  double  sum of the first terms  we get  
		(recall that necessarily $k\ge 1$) 
		\begin{align*}\sum_{k\ge 1}\,\sum_{m\ge 1}&\,\P(\PP_h\ge k+m-\lf k/2\rf)\le \sum_{k\ge 1}\,\sum_{m\ge 1}\, \tfrac{h^{k+m-\lf k/2\rf}}{1-h}=\tfrac 1{1-h}\left(\sum_{m\ge1}h^m\right)\left(\sum_{k\ge1}h^{k-\lf k/2\rf}\right)\\&=\frac h{(1-h)^2}\left(\sum_{j\ge0}h^{2j+1-j}+\sum_{j\ge1}h^{2j-j}\right)=2\frac{ h^2}{(1-h)^3}.
	\end{align*}
	This shows \eqref{Dk<} and combined with the earlier estimates gives
	\bel{Dk}\sum_{m\ge1}\sum_{k\in\Z}\P(\D_{k,t,h}\ge m,D_k)=O(h^2),\quad h\to0.
	\ee
	
	It remains to consider the $C_k$. If $k\ge0$ then the same argument as for the $B_k$ (the first particle must have made at least $k+1$ jumps by the time $t$ since there is a particle in position $k+1$; for at least $m$ crosses of $k$ between $t$ and $t+h$ the clock of the particle in position $k+1$ at a time $t$ and the clocks of $m$ particles to the left of $k$ must have rung between $t$ and $t+h$) yields 
	\bel{Ck>0}\sum_{m\ge1}\sum_{k\ge0}\P(\D_{k,t,h}\ge m,C_k)\le\sum_{m\ge1}\sum_{k\ge0}(1-e^{-h})^{m+1}\P(\PP_t\ge k+1)=
	O(h^2)\E\PP_t=O(h^2)
	.\ee
	When $k<0$ we split $C_k=C_k^<\cup C_k^>$ according to whether the position of the first hole at the time $t$ is to the left of $k$  or to the right of $k+1$. Note that on $C_k^<$ we have $H_t^{(1)}\ge -k+2$ (as the first hole have moved from $1$ to the left of $k$ by the time $t$) and the clocks of at least $m+1$ specific particles rung between $t$ and $t+h$. Therefore, 
	\bel{Ck<}\sum_{m\ge1}\sum_{k<0}\P(\D_{k,t,h}\ge m,C_k^<)\le\sum_{m\ge1}(1-e^{-h})^{m+1}\sum_{k\ge1}\P(\PP_t\ge k+2)=O(h^2).\ee
	Finally, consider  $C_k^>$. Let the position of the first hole at the time $t$ be $k+2+i$ for some $0\le i\le-k-1$ (the upper bound follows from  $k+i+2\le1$ as $1$ is the rightmost position the first hole can  ever be at). That means that up to time $t$, the first hole made $-k-i-1$ moves. It needs to make additional moves between $t$ and $t+h$. Namely, it needs to move to position $k+1$ (which takes $i+1$ moves) and then it needs to make at least $m$ additional moves to facilitate that many  crossings of $k$; a total of at least $m+i+1$ moves between $t$ and $t+h$.
	Thus, 
	\begin{align*}\P(\D_{k,t,h}\ge m,C_k^{>})&\le\sum_{i=0}^{-k-1}\P(H_t^{(1)}=-k-i-1,H_{t+h}^{(1)}-H_t^{(1)}\ge m+i+1)\\&=
		\sum_{i=0}^{j-1}\P(\PP_t=j-i-1)\P(\PP_h\ge m+i+1)
		.\end{align*}
	Using arguments parallel to those used for \eqref{2sums} and \eqref{Dk} we infer that 
	\[\sum_{m\ge1}\sum_{k<0}\P(\D_{k,t,h}\ge m,C_k^>)=O(h^2).\]
	Combining the last estimate with \eqref{Ck<} and \eqref{Ck>0}  we conclude that 
	\bel{Ck}\sum_{m\ge1}\sum_{k\in\Z}\P(\D_{k,t,h}\ge m,C_k)=O(h^2),\quad h\to0.
	\ee 
	
	Now, \eqref{Akm_large}, \eqref{Bk}, \eqref{Dk} and \eqref{Ck} imply that \eqref{Akm=1} is asymptotically dominant among   expressions \eqref{four}. Thus, referring to \eqref{exp_delta}, we obtain that 
	\[\lim_{h\to0+}\frac{g(t+h)-g(t)}h=\E\,a_t.\]
	The argument for $h\to0^-$ is basically the same with $t-h$ playing the role of $t$ and $t$ playing the role of $t+h$.  This completes the proof.
\end{proof}

\section{Remarks}

\subsection{Jumps of the rightmost particles}

For $r\ge1$ let $N^{(r)}_t$ be the number of jumps of the $r$th particle up to time $t$. Then 
\[N_t^{(r)}\le N_t^{(r-1)}\le\dots\le N_t^{(1)}\]
and since the number of jumps of the first particle by the time $t$  is governed by a  Poisson process 
we conclude that for every fixed $r\ge 1$
\[\E\ N_t^{(r)}\le t.\]
Nonetheless, while the $r$th particle moves slower than the first one,  the next statement asserts that asymptotically it still moves at a rate $t$ as $t\to\infty$.

\begin{proposition}
For any $r\ge 1$ we have 
\[\lim_{t\to\infty}\frac{\E\ N_t^{(r)}}t=1.\]
\end{proposition}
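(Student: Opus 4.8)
The plan is to reduce the statement to the hydrodynamic limit for $S(k,t)$ that is already available from Rost. Write $Y_t^{(r)}$ for the position of the $r$th particle at time $t$; since it starts at $1-r$ and only moves to the right, we have $N_t^{(r)}=Y_t^{(r)}+r-1\ge0$. The key observation I would record first is the deterministic identity
\[\{Y_t^{(r)}>k\}=\{S(k,t)\ge r\},\qquad k\in\Z,\]
which holds because the $r$th particle lies strictly to the right of $k$ exactly when there are at least $r$ particles to the right of $k$ (the positions to the right of $k$ are the largest ones, occupied by particles $1,\dots,S(k,t)$).

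Next I would turn the expectation into a tail sum. For the nonnegative integer variable $N_t^{(r)}$,
\[\E\,N_t^{(r)}=\sum_{n\ge0}\P\bigl(N_t^{(r)}>n\bigr)=\sum_{k\ge 1-r}\P\bigl(S(k,t)\ge r\bigr),\]
the second equality following from the identity above after the substitution $k=n-r+1$. Rewriting the sum over $k$ as a step-function integral (each $k$ contributing the interval $u\in[k/t,(k+1)/t)$) gives the exact representation
\[\frac{\E\,N_t^{(r)}}{t}=\int_{(1-r)/t}^{\infty}\P\bigl(S(\lf ut\rf,t)\ge r\bigr)\,du\ \ge\ \int_{0}^{\infty}\P\bigl(S(\lf ut\rf,t)\ge r\bigr)\,du,\]
where the inequality merely discards the nonnegative contribution on $u\in((1-r)/t,0)$.

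For the upper bound there is nothing to prove: the discussion preceding the statement already gives $\E\,N_t^{(r)}\le t$, so $\limsup_{t\to\infty}\E\,N_t^{(r)}/t\le1$. For the lower bound I would apply Fatou's lemma to the integral above. For each fixed $u\in(0,1)$, Rost's almost sure convergence $S(\lf ut\rf,t)/t\to h(u)$ with $h(u)=\tfrac14(1-u)^2>0$ forces $S(\lf ut\rf,t)\to\infty$ almost surely, whence $\II_{\{S(\lf ut\rf,t)\ge r\}}\to1$ almost surely and, by bounded convergence, $\P(S(\lf ut\rf,t)\ge r)\to1$ for every fixed $r$. Fatou then yields
\[\liminf_{t\to\infty}\frac{\E\,N_t^{(r)}}{t}\ \ge\ \int_{0}^{\infty}\liminf_{t\to\infty}\P\bigl(S(\lf ut\rf,t)\ge r\bigr)\,du\ \ge\ \int_{0}^{1}1\,du=1,\]
and combining the two bounds gives $\lim_{t\to\infty}\E\,N_t^{(r)}/t=1$.

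The only genuinely delicate points are the two limiting statements, and neither is hard. Passing from $S(\lf ut\rf,t)/t\to h(u)>0$ a.s.\ to $\P(S(\lf ut\rf,t)\ge r)\to1$ is immediate once one notes that divergence to $+\infty$ of $S(\lf ut\rf,t)$ makes the indicator of $\{S\ge r\}$ converge to $1$. Choosing to use Fatou rather than dominated convergence is precisely what lets me avoid constructing a $t$-uniform integrable majorant for large $u$; should one prefer dominated convergence, that tail estimate would become the main obstacle, but it can be supplied from $\P(S(k,t)\ge r)\le\P(N_t^{(1)}\ge k+r)$ together with the Stirling bound used earlier in the paper. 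With Fatou this is unnecessary, so the argument stays short.
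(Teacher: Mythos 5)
Your argument is correct and follows essentially the same route as the paper: both rest on the identity $\{N_t^{(r)}>n\}=\{S(n-r+1,t)\ge r\}$, the tail-sum formula for $\E\,N_t^{(r)}$, the trivial upper bound $\E\,N_t^{(r)}\le t$, and Rost's almost sure convergence $S(\lf ut\rf,t)/t\to h(u)>0$ for $u\in(0,1)$ to get $\P(S(\lf ut\rf,t)\ge r)\to1$. The only difference is cosmetic: where you convert the sum to an integral and invoke Fatou, the paper bounds $\sum_{k\ge0}\II_{\{S(k,t)\ge r\}}$ from below via a quantile-type inequality at level $(1-\eps)t$ and then lets $\eps\to0$; your Fatou step packages the same limit interchange a bit more cleanly.
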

\proof
By the earlier considerations
\[\limsup_{t\to\infty}\frac{\E\ N_t^{(r)}}t\le1\]
so it remains to prove that 
\bel{liminf}\liminf_{t\to\infty}\frac{\E\ N_t^{(r)}}t=1.\ee
To this end, we first note that 
\[N^{(r)}_t=r-1+\color{black}\inf\{k: S(k,t) < r\}\]
(in particular, for the first particle    $N^{(1)}_t=\inf\{k: S(k,t)<1\}
$).
Consequently, 
\[\{N^{(r)}_t\ge  k\}=\{S(k-r,t)\ge  r)\}\]
and it follows that
\[\E N_t^{(r)}=\sum_{k\ge 1} \P(N_t^{(r)}\ge k)=
\sum_{k\ge 1} \P(S(k-r,t)\ge r)\ge
\sum_{k\ge 0} \P(S(k,t)\ge r)
.\]

Pick any $0<\eps<1$.  Then 
\begin{align}\nonumber\frac{\E\ N_t^{(r)}}t&\ge \frac1t\sum_{k\ge0}\P(S(k,t)\ge r)=\frac1t\E\,\sum_{k\ge0}\II_{S(k,t)\ge r}\ge\frac1t\sup_{y>0}y\P(\sum_{k\ge0}\II_{S(k,t)\ge r}\ge y)
\\&\label{lbN} \\&\ge
\frac{(1-\eps)t+1}t\,\P(\sum_{k\ge0}\II_{S(k,t)\ge r}\ge (1-\eps)t+1)\ge(1-\eps)\P(\sum_{k\ge0}\II_{S(k,t)\ge r}\ge\lf (1-\eps)t\rf+1).\nonumber
\end{align}
Since $S(k,t)$ is decreasing in $k$,  for $t>0$  we have 
$\sum_{k\ge0}\II_{S(k,t)\ge r}\ge\lf (1-\eps)t\rf+1)$ if and only if $S(\lf (1-\eps)t\rf,t)\ge r$. 
Thus,
\bel{lbP}\P(\sum_{k\ge0}\II_{S(k,t)\ge r}\ge\lf (1-\eps)t\rf+1)=\P(S(\lf(1-\eps)t\rf,t)\ge r)=\P\left(\frac{S(\lf(1-\eps)t\rf,t)}t\ge \frac rt\right).
\ee
Let $t_0=8r/\eps^2$. For $t\ge t_0$, we have $r/t\le\eps^2/8$ and thus for such $t$
\bel{fin_lb}\P\left(\frac{S(\lf(1-\eps)t\rf,t)}t\ge \frac rt\right)\ge \P\left(\frac{S(\lf(1-\eps)t\rf,t)}t\ge \frac{\eps^2}8\right).\ee
As $t\to\infty$, 
\[\frac{S(\lf(1-\eps)t\rf,t)}t\stackrel{a.s.}{\longrightarrow} h(1-\eps)=\frac14\eps^2\]
where we have used the expression for $h(u)$ given in \eqref{h}.
Therefore,
\[\P\left(\frac{S(\lf(1-\eps)t\rf,t)}t\ge \frac{\eps^2}8\right)\longrightarrow1,\quad t\to\infty.\]
Combining this with \eqref{lbN}--\eqref{fin_lb} yields
\[\liminf_{t>0}\frac{\E\ N^{(r)}_t}t\ge 1-\eps.\]
Since $\eps$ was arbitrary, this proves \eqref{liminf} and  the Proposition.
\endproof

\subsection{Slow decrease 
}
Although we furnished independent proofs of Theorem~\ref{pole} and Theorem~\ref{thm_act_part} one might expect that Theorem~\ref{thm_active} would provide a way to deduce one from another. In fact, were the statements of Theorems~\ref{pole} and~\ref{thm_act_part} true for all $t$ rather then only in the limits, Theorem~\ref{thm_act_part} would have been an obvious corollary to Theorems~\ref{pole} and~\ref{thm_active}. In general, this is not true and one would need additional information about $\E\ a_t/t$ to deduce Theorem~\ref{thm_act_part} from  Theorems~\ref{pole} and~\ref{thm_active}. One such condition is a slow decrease. 
Recall (see e.g. \cite[Definition~p.~41; it seems that a 'slowing' in that definition should read 'slowly']{bgt} or \cite[(6.2.2),  (6.2.3), Section~6.2, p.~124]{h})  that a function $f$ is slowly decreasing 
if  
\bel{slo_mo}
\lim_{\la\searrow1}\liminf_{x\to\infty}\inf_{\al\in [1,\lambda]}\left(f(\al x)-f(x)\right)\ge0.
\ee
(Note that since the infimum in \eqref{slo_mo} is non--positive, \eqref{slo_mo} is equivalent to the quantity on the left being equal to zero.)  For a slowly decreasing functions $f$ one has (see \cite[Item~15~Section~1.11, p.~59]{bgt} or \cite[footnote to Theorem~68, p.~126]{h})
\[
\frac1x\int_0^x\,f(t)\,dt\longrightarrow c,\quad x\to\infty\quad\implies f(x)\to c,\quad x\to\infty.
\]
Knowing that the function $f(t)= \E\,a_t/t$, $t\ge0$, is slowly decreasing would allow one to derive Theorem~\ref{thm_act_part} from  Theorems~\ref{pole} and~\ref{thm_active}.
Since we provided a direct proof of  Theorem~\ref{thm_act_part} we  only sketch a proof  that  $\E\, a_t/t$ is, indeed, slowly decreasing:
\begin{proposition} 
The function
\[t \longrightarrow \frac{\E\,a_t}t,\quad t\ge0\] is slowly decreasing.
\end{proposition}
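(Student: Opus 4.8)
The plan is to deduce slow decrease from two ingredients: the boundedness of $\E\,a_t/t$ for large $t$ (which is Theorem~\ref{thm_act_part}) and the monotonicity of $t\mapsto\E\,a_t$. Granting the latter, the verification of \eqref{slo_mo} is routine. Writing $f(t)=\E\,a_t/t$ and using $\E\,a_{\al x}\ge\E\,a_x$ for $\al\ge1$, for every $\al\in[1,\la]$ and every $x$ large enough that $f(x)\le M:=\sup_{s\ge1}f(s)<\infty$ one gets
\[
f(\al x)-f(x)=\frac1x\Big(\frac{\E\,a_{\al x}}{\al}-\E\,a_x\Big)\ge\frac{\E\,a_x}{x}\Big(\frac1\al-1\Big)=f(x)\Big(\frac1\al-1\Big)\ge M\Big(\frac1\la-1\Big),
\]
the last step using $0\le f(x)\le M$ and $\tfrac1\al-1\in[\tfrac1\la-1,0]$. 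Taking $\inf_{\al\in[1,\la]}$, then $\liminf_{x\to\infty}$, and finally $\la\searrow1$ shows the left side of \eqref{slo_mo} is $\ge\lim_{\la\searrow1}M(1/\la-1)=0$, which is what is needed.

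Thus everything reduces to proving that $\E\,a_t$ is non-decreasing, and the first step is to differentiate it through the Markov generator. Since $a_t=\sum_k X(k,t)(1-X(k+1,t))$ from \eqref{repra} counts, for each configuration, the occupied--empty adjacencies (equivalently the maximal particle clusters), a single jump of the particle at $k$ changes $a_t$ by $X(k-1,t)-X(k+2,t)$. Applying Dynkin's formula and weighting this increment by the allowed rates $X(k,t)(1-X(k+1,t))$, I expect to arrive at
\[
\frac{d}{dt}\,\E\,a_t=\E\sum_{k\in\Z}X(k,t)X(k+1,t)(1-X(k+2,t))-\E\sum_{k\in\Z}X(k,t)(1-X(k+1,t))X(k+2,t),
\]
i.e.\ the expected number of $\bullet\bullet\circ$ triples minus the expected number of $\bullet\circ\bullet$ triples, the interchange of $d/dt$ with the (a.s.\ finite) sums being justified by the tail bound of Lemma~\ref{lem_exp_skt}. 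Hence $\E\,a_t$ is non-decreasing precisely when $\sum_k\big(\rho_t(k,k+1)-\rho_t(k,k+2)\big)\ge0$ for all $t$, where $\rho_t(i,j)=\E[X(i,t)X(j,t)]$.

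The hard part is this correlation inequality. A leading-order heuristic is useless: in any product (local-equilibrium) approximation $\P(\bullet\bullet\circ)=\rho^2(1-\rho)=\P(\bullet\circ\bullet)$, so the two $\Theta(t)$ main terms cancel exactly and the sign of $\frac{d}{dt}\E\,a_t$ is dictated entirely by genuine two-point correlations. To handle them I would first use the particle--hole duality \eqref{XX} to rewrite the subtracted term as $\E\#\{\bullet\circ\bullet\}=\E\#\{\circ\bullet\circ\}$ (isolated holes are distributed as isolated particles), which recasts the claim as: in expectation there are at least as many clusters of size $\ge2$ as singleton clusters. The cleanest sufficient condition is the pointwise gap-monotonicity $\rho_t(k,k+1)\ge\rho_t(k,k+2)$ for every $k$; this holds for the step profile at $t=0$, and I would try to propagate it either by a local coupling of the graphical construction or by combining the step-initial-data correlation asymptotics behind \eqref{ros1} with a quantitative finite-$t$ estimate. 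Establishing this inequality uniformly in $t$ is the step I expect to demand real work; the reduction and the generator computation above are essentially bookkeeping.
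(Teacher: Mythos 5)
Your reduction of \eqref{slo_mo} to the monotonicity of $t\mapsto\E\,a_t$ is correct as far as it goes (the chain of inequalities in your first display is valid, using $\E\,a_t\le t$ so that $M\le 1$), and your generator computation is also right: a jump at $k$ changes $a_t$ by $X(k-1,t)-X(k+2,t)$, so $\frac{d}{dt}\E\,a_t$ is the expected number of $\bullet\bullet\circ$ triples minus the expected number of $\bullet\circ\bullet$ triples. But the proof then rests entirely on the correlation inequality $\sum_k\big(\rho_t(k,k+1)-\rho_t(k,k+2)\big)\ge0$, which you explicitly leave open. As you yourself observe, the product-measure heuristic cancels exactly, so the sign is governed by genuine two-point correlations; neither the duality \eqref{XX} nor the asymptotics behind \eqref{ros1} gives this at finite $t$, and no coupling or propagation argument is supplied. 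This is not bookkeeping --- it is the entire content of your approach, and as written the proposal does not prove the Proposition.

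It is also worth noting that you are trying to prove something strictly stronger than needed. Slow decrease only involves $\liminf_{t\to\infty}$, so one never needs a pointwise sign for $\E\,a_{\al t}-\E\,a_t$; one only needs that this difference, divided by $\al t$, is asymptotically non-negative uniformly for $\al\in[1,\la]$ as $\la\searrow1$. The paper exploits exactly this: after splitting off the harmless term $\frac{(\al-1)\E\,a_t}{\al t}$ (which vanishes as $\la\searrow1$ since $\E\,a_t\le t$), it reuses the decomposition from the proof of Theorem~\ref{thm_act_part} --- the truncation at $|k|<k_t\sim\beta t$, together with the hydrodynamic limits \eqref{int_f} and \eqref{Rostf2} --- to show that $\frac{\E\,a_{\al t}-\E\,a_t}{\al t}$ converges to an explicit difference of integrals of $f$ and $f^2$ over $[-\beta/\al,\beta/\al]$ versus $[-\beta,\beta]$, which tends to $0$ as $\al\to1$. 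That route requires no finite-$t$ correlation inequality at all. If you want to salvage your approach you must either prove the gap-monotonicity $\rho_t(k,k+1)\ge\rho_t(k,k+2)$ (or the summed version) for all $t$, or replace the monotonicity step by an asymptotic estimate of the type the paper uses.
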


\begin{proof} (Sketch)
Since  
\[\frac{\E\, a_{\al t}-\al\E\, a_t}
{\al t}
=\frac{\E\, a_{\al t}-\E\, a_t}{\al t}
-\frac{(\al-1)\E\, a_t}{\al t},
\]
by superadditivity of  $\inf$ and $\liminf$, the left--hand side of 
\eqref{slo_mo} for our function is bounded below by
\[\lim_{\la\searrow1}\liminf_{t\to\infty}\inf_{\al\in [1,\lambda]}
\frac{\E\, a_{\al t}-\E\, a_t}{\al t}
-\lim_{\la\searrow1}\limsup_{t\to\infty}\sup_{\al\in [1,\lambda]}\frac{(\al-1)\E\,a_t}{\al t}.\]
Since $1\le a_t\le N_t^{(1)}$ we have $1\le\E\, a_t\le t$ and hence, the second term vanishes as $\la\searrow1$. 
We need to show that
\bel{slo_main}\lim_{\la\searrow1}\liminf_{t\to\infty}\inf_{\al\in [1,\lambda]}
\frac{\E\, a_{\al t}-\E\, a_t}{\al t}=0.
\ee
For $t>0$ and  $s=(\al-1) t$ 
\begin{align*}\E\,(a_{t+s}-a_t)&
	=
	\E\,\left(\sum_{|k|< k_t}+\sum_{|k|\ge k_t}\right)\Big(X(k,t+s)(1-X(k+1,t+s))-X(k,t)(1-X(k+1,t))\Big).
\end{align*}

Similarly as in the proof of Theorem \ref{thm_act_part} we conclude that the sum over $|k|\ge k_t\sim\beta t$ vanishes as $t\to\infty$.
The sum over $|k|<k_t$  we split into \color{black}
\begin{align*}
	& \E\,\sum_{|k|< k_t}\Big(X(k,t+s)-X(k,t)\Big)+\E\,\sum_{|k|< k_t}\Big(X(k,t)X(k+1,t)-X(k,t+s)X(k+1,t+s)\Big).
\end{align*}
Again referring to the proof of Theorem \ref{thm_act_part}, we use \eqref{int_f} and \eqref{Rostf2} to derive the limits
\begin{align*}&
	\tfrac1{\al t}\E\,\sum_{|k|<k_t}\Big(X(k,t+s)-X(k,t)\Big)
		\to\int_{-\frac\beta\al}^{\frac\beta\al}f(w)dw-\tfrac1{\al}\int_{-\beta}^\beta f(w)dw
\end{align*}
and 
\[\tfrac1{\alpha t}\,\E\,\sum_{|k|\le k_t}\Big(X(k,t)X(k+1,t)-X(k,t+s)X(k+1,t+s)\Big)\to \tfrac1\al\int_{-\beta}^\beta f^2(w)dw -\int_{-\frac\beta\al}^{\frac\beta\al}f^2(w)dw
\]
as $t\to\infty$. Since $1<\al<\la\searrow1$ the right--hand sides above  converge to zero, and thus \eqref{slo_main} follows. 
\end{proof}
\color{black}

{\bf Acknowledgments.}  
This work was carried out during P.~H.'s visit to Warsaw University of Technology and was supported in part by the Ko\'sciuszko Foundation through its Research Exchange to Poland program. J.~W. was supported in part by
 National Science Center Poland [project no.  2023/51/B/ST1/01535].

 We would like to warmly thank Dominik Schmid for generously sharing with us his intuitions, for all inspiring discussions, as well as for a detailed literature guidance.

\end{document}